\newcommand{\bel}[1]{\begin{equation}\label{#1}}
\newcommand{\be}{\begin{equation}}
\newcommand{\ba}{\begin{eqnarray}}
\newcommand{\ea}{\end{eqnarray}}
\newcommand{\qe}{\end{equation}}
\newcommand{\Hmm}[1]{\leavevmode{\marginpar{\tiny%
$\hbox to 0mm{\hspace*{-0.5mm}$\leftarrow$\hss}%
\vcenter{\vrule depth 0.1mm height 0.1mm width \the\marginparwidth}%
\hbox to
0mm{\hss$\rightarrow$\hspace*{-0.5mm}}$\\\relax\raggedright #1}}}
\newtheorem{theorem}{Theorem}[section]
\newtheorem{lemma}[theorem]{Lemma}
\newtheorem{corollary}[theorem]{Corollary}
\begin{document}

\title[Liouville theorem for $V$-harmonic heat flows]{Liouville theorem for $V$-harmonic heat flows}

\author{Han Luo}
\address{Han Luo: School of Mathematics and Statistics, Nanjing University of Science and Technology, Nanjing
210094, Jiangsu, People’s Republic of China}
\email{\href{mailto:19110180023@fudan.edu.cn}{19110180023@fudan.edu.cn}}

\author{Weike Yu}
\address{Weike Yu: School of Mathematical Sciences, Ministry of Education Key Laboratory of NSLSCS, Nanjing Normal University, Nanjing, 210023, Jiangsu, People’s Republic of China}
\email{\href{mailto:wkyu2018@outlook.com}{wkyu2018@outlook.com}}

\author{Xi Zhang}
\address{Xi Zhang: School of Mathematics and Statistics, Nanjing University of Science and Technology, Nanjing
210094, Jiangsu, People’s Republic of China}
\email{\href{mailto:mathzx@ustc.edu.cn}{mathzx@ustc.edu.cn}}

\begin{abstract}
In this paper, we investigate $V$-harmonic heat flows from complete Riemannian manifolds with nonnegative Bakry-Emery Ricci curvature to complete Riemannian manifolds with sectional curvature bounded above. We give a gradient estimate of ancient solutions to this flow and establish a Liouville type theorem, which is a complementarity of \cite{MR4582134}.
\end{abstract}

\maketitle

\section{Introduction}\label{sec:intro}

In 1975, Yau \cite{{MR431040}} established the celebrated Liouville theorem which shows that on a complete Riemannian manifold of nonnegative Ricci curvature, there does not exist any nontrivial harmonic function bounded from one side. Later, Souplet–Zhang \cite{MR2285258} proved the following parabolic version of Yau’s Liouville theorem.
\begin{theorem}\label{thm:Souplet–Zhang}
Let $(M,g)$ be a complete Riemannian manifold with non-negative Ricci curvature. Then we have

(1) Let $u:M\times (-\infty, 0]\to (0,\infty)$ be a positive ancient solution to the heat equation. If
\begin{equation*}
u(x,t)=\text{exp}[o(d(x)+\sqrt{\vert t\vert})]
\end{equation*}    
near infinity, then $u$ must be a constant. Here $d(x)$ denotes the Riemannian distance
from a fixed point;

(2) Let $u:M\times (-\infty, 0]\to \mathbb{R}$ be an ancient solution to the heat equation. If
\begin{equation*}
u(x,t)=o(d(x)+\sqrt{\vert t\vert})
\end{equation*}    
near infinity, then $u$ is a constant.
\end{theorem}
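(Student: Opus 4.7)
The plan is to establish a Souplet--Zhang type \emph{local} gradient estimate for positive solutions of the heat equation on parabolic cylinders, and then let the cylinder exhaust $M\times(-\infty,0]$, using the growth hypothesis to kill the right-hand side. Part~(2) will be reduced to the same gradient estimate by a translation trick that exploits the sub-linear growth.

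For part~(1), I would work with $w=\log u$, which satisfies $\partial_t w=\Delta w+|\nabla w|^2$. Fix $(x_0,t_0)$ and a parabolic cylinder $Q_{2R}:=B(x_0,2R)\times[t_0-4R^2,t_0]$, and let $M_{2R}:=\sup_{Q_{2R}} u$. Normalize by setting $\tilde w:=\log(u/(2M_{2R}))\le-\log 2<0$ on $Q_{2R}$. The key auxiliary function is
\[
F:=\frac{|\nabla\tilde w|^2}{(1-\tilde w)^2},
\]
multiplied by a standard space-time cutoff $\varphi$ vanishing outside $Q_{2R}$ and identically $1$ on $Q_R$. Using Bochner's formula
\[
\tfrac12\Delta|\nabla\tilde w|^2=|\mathrm{Hess}\,\tilde w|^2+\Ric(\nabla\tilde w,\nabla\tilde w)+\langle\nabla\tilde w,\nabla\Delta\tilde w\rangle
\]
together with $\Ric\ge 0$, I would compute $(\partial_t-\Delta)F$ and check that the cubic cross terms produced by differentiating the denominator are absorbed by $|\mathrm{Hess}\,\tilde w|^2$. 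Evaluating at an interior maximum of $\varphi F$ and applying the standard bounds on $|\nabla\varphi|$, $|\Delta\varphi|$, $\partial_t\varphi$, one obtains the Hamilton type estimate
\[
\frac{|\nabla u|}{u}(x_0,t_0)\le \frac{C}{R}\left(1+\log\frac{M_{2R}}{u(x_0,t_0)}\right).
\]

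Letting $R\to\infty$ and using the hypothesis $u=\exp[o(d(x)+\sqrt{|t|})]$, we have $\log M_{2R}=o(R)$, so the right-hand side tends to $0$. Hence $|\nabla u|\equiv 0$, and the heat equation then forces $u_t=\Delta u=0$, so $u$ is constant. For part~(2), given an ancient solution $u$ with $u(x,t)=o(d(x)+\sqrt{|t|})$, fix $(x_0,t_0)$, set $A_R:=2\sup_{Q_{2R}}|u|+1$, and consider $v:=u+A_R$, which is a \emph{positive} solution of the heat equation \emph{on the cylinder} $Q_{2R}$. Since $v(x_0,t_0)\ge A_R/2$ and $\sup_{Q_{2R}}v\le 3A_R/2$, the local gradient estimate applied to $v$ gives $|\nabla u|(x_0,t_0)=|\nabla v|(x_0,t_0)\le C A_R/R$; and $A_R=o(R)$ by hypothesis, so sending $R\to\infty$ yields $|\nabla u|(x_0,t_0)=0$, whence $u$ is constant.

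The main obstacle will be the algebraic bookkeeping in the Bochner computation: the precise form of the denominator $(1-\tilde w)^2$ is what ensures the nonnegative square term from $|\mathrm{Hess}\,\tilde w|^2$ dominates the gradient-cubed terms coming from the denominator, and one must retain the logarithmic dependence on $M_{2R}/u$, since that is exactly what matches a \emph{sub-exponential} (rather than merely polynomial) growth rate. A secondary, but genuine, difficulty is the construction of the space-time cutoff: since $M$ is non-compact, $\varphi$ must be compactly supported inside $Q_{2R}$ with the correct $C^2$ bounds, and in the absence of a second-order bound on the distance function this requires either the Calabi trick or a smoothing argument in the usual way.
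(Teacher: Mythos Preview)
The paper does not actually prove this statement: Theorem~\ref{thm:Souplet–Zhang} is quoted in the introduction as background from Souplet--Zhang~\cite{MR2285258}, and no proof is given anywhere in the paper. So there is nothing in the paper to compare your argument against.

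That said, your proposal is essentially the original Souplet--Zhang argument itself: the auxiliary function $F=|\nabla\tilde w|^2/(1-\tilde w)^2$ with $\tilde w=\log(u/2M_{2R})$, the Bochner computation under $\Ric\ge 0$, the space-time cutoff with Calabi's trick for the cut locus, and the resulting local estimate $|\nabla u|/u\le (C/R)(1+\log(M_{2R}/u))$ followed by exhaustion---this is exactly how the result is proved in~\cite{MR2285258}. Your reduction of part~(2) to part~(1) via the shifted positive function $v=u+A_R$ on each cylinder is also the standard move. The sketch is correct and the anticipated obstacles (the algebraic absorption of the cubic gradient terms and the cutoff construction on a noncompact manifold) are the genuine technical points of the proof.
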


Recently, Kunikawa-Sakurai \cite{MR4322556, MR4299896} have proved some analogue Liouville type theorems for an ancient super Ricci flow under a growth condition concerning reduced distance. One of their main results is stated as follows.
\begin{theorem}\label{thm:improved Choi Liouville Theorem}
Let $(M,g(t))_{t\in[0,\infty)}$ be an admissible, complete backward super Ricci flow. We assume 
\begin{equation*}
\mathcal{D}(U)\ge 0,\quad \mathcal{H}(U)\ge -\frac{\widetilde{S}}{t},\quad \widetilde{S}\ge 0  
\end{equation*}
for all vector fields $U$. Here $S=\frac{1}{2}\partial_{t} g$, $\widetilde{S}=\text{tr}\,S$, $\mathcal{D}(U)=-\partial_{t} \widetilde{S}-\Delta \widetilde{S}-2\Vert S\Vert^2+4\text{div}\, S(U)-2g(\nabla \widetilde{S},U)+2\text{Ric}(U,U)-2S(U,U)$ and
$\mathcal{H}(U)=-\partial_{t} \widetilde{S}-\frac{1}{t}\widetilde{S}-2g(\nabla\widetilde{S},U)+2S(U,U)$. Let $(N,h)$ be a complete Riemannian manifold with sectional curvature bounded above by a positive constant $\kappa$. Assume that an open geodesic ball $B_{\pi/2\sqrt{\kappa}}(y_0)$ of radius $\pi/2\sqrt{\kappa}$ centered at $y_0$ in $N$ does not meet the cut locus $\text{Cut}(y_0)$ of $y_0$. Suppose that $u:M\times [0,\infty)\to N$ is a solution to backward harmonic map heat flow. If the image of $u$ is contained in $B_{\pi/2\sqrt{\kappa}}(y_0)$ and $u$ satisfies a growth condition 
\begin{equation*}
\frac{1}{\cos\sqrt{\kappa}\rho(u(x,t))} = o(\sigma(x,t)^{1/2}+t^{1/4})   
\end{equation*}
near infinity, then $u$ is constant. Here $\rho:N\to \mathbb{R}$ is the Riemannian distance function from a fixed point $y_0\in N$.
\end{theorem}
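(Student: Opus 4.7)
My plan is to reduce the statement to a scalar Liouville-type problem by composing $u$ with a strongly convex function on the target, and then to apply a Souplet--Zhang style gradient estimate adapted to the backward super Ricci flow.

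First I would set $v := \sec(\sqrt{\kappa}\,\rho\circ u)$, which is well-defined and smooth because $\mathrm{Im}(u)\subset B_{\pi/2\sqrt{\kappa}}(y_0)$ and this ball avoids $\mathrm{Cut}(y_0)$; note $v\ge 1$, and the hypothesis rewrites as $v = o(\sigma(x,t)^{1/2}+t^{1/4})$ near infinity. Hessian comparison on $N$ (valid inside the ball since $\mathrm{Sec}_N\le\kappa$ and no cut points are met) shows that $s\mapsto\sec(\sqrt{\kappa}s)$ is $\kappa$-convex in a sharp sense, giving $\nabla^2_N v \ge \kappa v\, g_N$ plus a nonnegative $d\rho\otimes d\rho$ term. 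Combining this with the chain rule and the backward harmonic map heat flow equation produces a parabolic inequality of the form
\[
\bigl(\partial_t - \Delta^{g(t)}\bigr)v \;\le\; -\,\kappa\, v\, |du|^2,
\]
so $v$ is a positive subsolution of $(\partial_t-\Delta^{g(t)})v\le 0$ with a favorable lower-order term along the flow.

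Next I would establish a local Souplet--Zhang-type gradient estimate for such subsolutions under $\mathcal{D}(U)\ge 0$ and $\mathcal{H}(U)\ge-\widetilde{S}/t$. The strategy is to form $w := \log(A/v)$ on a space-time cylinder with $A := \sup v$, apply the Bochner identity to $|\nabla w|^2$, and run a maximum-principle argument with a cutoff $\eta$ constructed from Perelman's reduced distance $\sigma$. The curvature error terms that arise when commuting $\partial_t - \Delta^{g(t)}$ with $\nabla$ are precisely what is controlled by $\mathcal{D}(U)\ge 0$, and the singular $1/t$ contribution is absorbed using $\mathcal{H}(U)\ge -\widetilde{S}/t$. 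The output would be a bound of the schematic form
\[
\frac{|\nabla v|}{v}\,\Bigl(1+\log\tfrac{A}{v}\Bigr)^{-1} \;\le\; C\bigl(\sigma(x,t)^{-1/2}+t^{-1/4}\bigr),
\]
which is the analogue of the Souplet--Zhang bound scaled to the reduced distance; the exponents here are exactly what the growth condition is calibrated to defeat.

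Finally, I would let the cylinder exhaust $M\times[0,\infty)$. The growth hypothesis forces the right-hand side to $0$ at each interior point, so $|\nabla v|\equiv 0$, i.e.\ $\rho(u(x,t))$ is independent of $x$. Substituting back into the parabolic inequality above, the nonpositive right-hand side $-\kappa v|du|^2$ must vanish, hence $|du|\equiv 0$ and $u$ is constant. I expect the main obstacle to be the gradient estimate itself on the time-dependent metric: Bochner's formula acquires terms involving $\partial_t g$, and one must justify the maximum-principle manipulation with a cutoff based on the reduced distance, whose regularity is only Lipschitz in general. The tensor inequalities $\mathcal{D}(U)\ge 0$ and $\mathcal{H}(U)\ge-\widetilde{S}/t$ are designed exactly to absorb these error terms and replace the classical nonnegative Ricci assumption that underpins the static Souplet--Zhang argument.
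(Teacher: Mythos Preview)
The paper does not itself prove this theorem; it is quoted from Kunikawa--Sakurai as motivation. But the paper's proof of its own analogue (the $V$-harmonic version) is explicitly modeled on the Kunikawa--Sakurai argument, so that is the natural point of comparison, and your proposal departs from it in an essential way that creates a real gap.

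The paper never reduces to a scalar subsolution. It works directly with the weighted energy density
\[
\omega \;=\; \frac{\|du\|^2}{(b-\varphi\circ u)^2},\qquad \varphi=1-\cos\sqrt{\kappa}\rho,\quad b=\tfrac12\Bigl(1+\sup_{Q_{R,T}}\varphi\circ u\Bigr),
\]
and uses the Bochner formula for maps (which needs both the lower Ricci-type bound on the domain and the upper sectional bound on the target) together with the Hessian comparison for $\varphi$ to obtain a differential inequality of the form
\[
(\Delta-\partial_t)\omega \;-\;\text{(drift term)} \;\ge\; 2\kappa(1-b)(b-\varphi\circ u)\,\omega^2 \;-\;2A\,\omega.
\]
A cutoff maximum-principle argument on $\psi\omega$ then yields a pointwise bound on $\|du\|/(b-\varphi\circ u)$ that tends to zero under the growth hypothesis as the cylinder exhausts space-time. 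This gives $\|du\|\equiv 0$ directly.

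Your scalar composition $v=\sec(\sqrt{\kappa}\rho\circ u)$ only detects the \emph{radial} component of $du$: since $\nabla v=\sqrt{\kappa}\,v\tan(\sqrt{\kappa}\rho)\,u^*d\rho$, the conclusion $|\nabla v|\equiv 0$ says exactly that $\rho\circ u$ is independent of $x$, not that $u$ is constant. Your last step then breaks: with $\Delta v=0$ the inequality reads $\partial_t v \le -\kappa v\,|du|^2$, which only forces $v$ to be nonincreasing in $t$ (and $v\ge 1$), not constant, so you cannot infer $|du|\equiv 0$. In the original Souplet--Zhang theorem the analogous step succeeds because their $u$ is an exact solution, so $|\nabla u|=0$ gives $\partial_t u=\Delta u=0$; here $v$ is only a strict subsolution and that mechanism is unavailable. (There is also a secondary concern: the Souplet--Zhang gradient bound is proved for solutions, and the sign in $(\partial_t-\Delta)\log(A/v)$ goes the wrong way for a subsolution, so even the intermediate estimate needs justification.) The remedy is exactly what the paper does: skip the scalar reduction and estimate the full energy density $\|du\|^2$ via the map Bochner formula, so that the quantity being bounded is $\|du\|$ itself.
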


Furthermore, using the above theorem, Kunikawa-Sakurai improved the Liouville theorem of Choi \cite{MR647905} as follows: 
\begin{theorem}\label{thm:improved Choi Liouville Theorem}
Let $(M,g)$ be a complete Riemannian manifold with nonnegative Ricci curvature, and let $(N,h)$ be a complete Riemannian manifold with sectional curvature bounded above by a positive constant $\kappa$. Assume that $B_{\pi/2\sqrt{\kappa}}(y_0)$ does not meet $\text{Cut}(y_0)$. Let $u:M\to N$ be a harmonic map. If the image of $u$ is contained in $B_{\pi/2\sqrt{\kappa}}(y_0)$ and $u$ satisfies a growth condition 
\begin{equation*}
\frac{1}{\cos\sqrt{\kappa}\rho(u(x))} = o(d(x)^{1/2})   
\end{equation*}
near infinity, then $u$ is constant.
\end{theorem}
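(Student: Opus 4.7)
The plan is to derive this elliptic statement from the parabolic Kunikawa--Sakurai Liouville theorem stated just above, by promoting the static data $(M,g,u)$ to a trivial backward super Ricci flow together with a time-independent solution of the backward harmonic map heat flow.

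First, set $g(t)\equiv g$ for $t\in[0,\infty)$. Then $S=\tfrac12\partial_t g\equiv 0$ and so $\widetilde S\equiv 0$; substituting $S=0$ into the expressions for $\mathcal D(U)$ and $\mathcal H(U)$ collapses them to
\[
\mathcal D(U)=2\,\mathrm{Ric}(U,U)\geq 0,\qquad \mathcal H(U)=0=-\widetilde S/t,\qquad \widetilde S=0\geq 0,
\]
so $(M,g(t))$ is an admissible complete backward super Ricci flow satisfying all curvature hypotheses of the parabolic theorem. Next, define $\widetilde u:M\times[0,\infty)\to N$ by $\widetilde u(x,t):=u(x)$. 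Since $u$ is harmonic ($\tau(u)=0$) and $\partial_t\widetilde u=0$, the map $\widetilde u$ trivially solves the backward harmonic map heat flow, and its image remains contained in $B_{\pi/2\sqrt\kappa}(y_0)$, away from $\mathrm{Cut}(y_0)$.

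The key remaining step is to transfer the elliptic growth hypothesis to its parabolic analogue. In the static case a standard $\mathcal L$-geodesic calculation (Euler--Lagrange for $\int_0^t\sqrt s\,|\dot\gamma|^2\,ds$ along the minimizing geodesic from the basepoint to $x$ gives a minimizer proportional to $\sqrt s$) shows $\sigma(x,t)\sim d(x)^2/\sqrt t$ in the normalization of \cite{MR4322556, MR4299896}, since the scalar curvature is nonnegative and the metric is time-independent. Therefore AM--GM yields
\[
\sigma(x,t)^{1/2}+t^{1/4}\;\gtrsim\;\frac{d(x)}{t^{1/4}}+t^{1/4}\;\gtrsim\;2\,d(x)^{1/2}.
\]
Combined with the hypothesis $1/\cos\sqrt\kappa\rho(u(x))=o(d(x)^{1/2})$ as $d(x)\to\infty$, this gives the parabolic growth $1/\cos\sqrt\kappa\rho(\widetilde u(x,t))=o(\sigma(x,t)^{1/2}+t^{1/4})$ near parabolic infinity (in the complementary regime where $d(x)$ stays bounded and $\sqrt t\to\infty$, the numerator is bounded because $u(x)$ lives in the compact ball $B_{\pi/2\sqrt\kappa}(y_0)$, while $t^{1/4}\to\infty$). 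Applying the parabolic theorem then forces $\widetilde u$, and hence $u$, to be constant.

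The main obstacle is precisely this translation of growth conditions. One has to pin down the exact normalization of the reduced distance $\sigma$ used in \cite{MR4322556, MR4299896} so that the static $\mathcal L$-geodesic computation produces exactly $\sigma(x,t)^{1/2}\sim d(x)/t^{1/4}$; with that in hand, the AM--GM balance matches the hypothesis's $d(x)^{1/2}$ threshold, and the optimization over $t$ (with extremal choice $t\sim d(x)^2$) is sharp. Any mismatch in the powers would yield a different elliptic growth rate than the one stated, so the bulk of the technical bookkeeping is located here.
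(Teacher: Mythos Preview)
The paper does not supply its own proof of this statement: Theorem~1.3 is quoted from Kunikawa--Sakurai, with the only indication being the sentence ``using the above theorem, Kunikawa--Sakurai improved the Liouville theorem of Choi as follows.'' Your strategy---promoting $(M,g,u)$ to a static backward super Ricci flow and a time-independent solution of the heat flow, then invoking the parabolic theorem---is precisely the route the paper attributes to them, and it is also the same mechanism by which the present paper deduces its own Corollary~1.5 from Theorem~1.4 (see the one-line proof at the end of Section~2, where a $V$-harmonic map is regarded as a time-independent ancient solution). So there is nothing further to compare; your outline is correct and matches the indicated approach.

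Two small remarks. First, in your handling of the bounded-$d(x)$ regime you say $u(x)$ lives in the ``compact ball'' $B_{\pi/2\sqrt\kappa}(y_0)$; that ball is open, and the point is rather that $u$ is continuous on the compact set $\{d\le R_0\}\subset M$, so $\rho\circ u$ attains a maximum strictly below $\pi/(2\sqrt\kappa)$ there. Second, you are right that the only genuine work is the normalization of the reduced distance in the static case; once one checks that $\sigma(x,t)^{1/2}$ scales like $d(x)/t^{1/4}$ in the conventions of \cite{MR4299896}, the AM--GM balance you wrote goes through exactly and produces the $d(x)^{1/2}$ threshold.
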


From now on, we suppose that $(M, g)$ and $(N, h)$ are both complete Riemannian manifolds. Recall that a map $u: M\to N$ is called a $V$-harmonic map if it solves
\begin{equation}\label{eq:V-harmonic map}
\tau_{V}(u):=\tau(u)+du(V)=0.
\end{equation}
According to \cite{MR3427131}, we have learnt that some interesting maps are exactly $V$-harmonic maps, such as Hermitian harmonic maps(c.f. \cite{MR1226528, MR2296630, MR3077216}), Wely harmonic maps(c.f. \cite{MR2520354}), affine harmonic maps(c.f. \cite{MR2554637, MR2796654}) and harmonic maps from Finsler manifolds into Riemannian manifolds(c.f. \cite{MR1757888, MR2407111, MR2483812, MR1895460, MR2156621}). Moreover, the results and methods of $V$-harmonic maps have been applied successfully in submanifolds geometry and mean curvature flows(c.f. \cite{MR3479571, MR4478479, MR4496501, MR4290282}), since Gauss maps of a self-shrinker or a translating soliton can be seen as a $V$-harmonic map. 

The heat flow associated to (\ref{eq:V-harmonic map}) is
\begin{equation}\label{eq:V-harmonic heat flow}
\frac{\partial}{\partial t}u=\tau_{V}(u)=\tau(u)+du(V).
\end{equation}
Chen-Jost-Wang \cite{MR3427131} proved the global existence and convergence of the $V$-harmonic map heat flow from compact manifolds into regular balls by using the maximum principle and continuous method. In \cite{MR2995205}, Chen-Jost-Qiu gave the existence theorems for $V$-harmonic maps between complete manifolds. When the source manifold is a complete noncompact Riemmanian manifold with nonnegative Bakry–Emery Ricci curvature, Chen-Qiu \cite{MR4582134} obtained Souplet–Zhang type and Choi type Liouville theorems of ancient solutions to (\ref{eq:V-harmonic heat flow}) under certain growth condition near infinity. Inspired by the above works, this paper aims to establish the analogue of the Liouville theorem for ancient solutions to (\ref{eq:V-harmonic heat flow}). 

Suppose the vector field $V$ satisfies
\begin{equation}\label{eq:additional growth condition 1}
 g(V,\nabla r)\le v(r)   
\end{equation}
for some nondecreasing function $v(\cdot)$ satisfying
\begin{equation}\label{eq:additional growth condition 2}
\lim_{r\to\infty} \frac{\vert v(r) \vert}{r}=C_V<+\infty,
\end{equation} 
where $r$ is the Riemannian distance of $(M,g)$ from a fixed point. Our main results are as follows:

\begin{theorem}\label{thm:Liouville Theorem heat flow}
Let $(M,g)$ be a complete Riemannian manifold with nonnegative Bakry-Emery Ricci curvature, and $(N,h)$ be a complete Riemannian manifold with sectional curvature bounded above by a positive constant $\kappa$. Assume that $B_{\pi/2\sqrt{\kappa}}(y_0)$ does not meet $\text{Cut}(y_0)$. Suppose that the vector field $V$ satisfies (\ref{eq:additional growth condition 1}) for some nondecreasing function $v(\cdot)$ satisfying (\ref{eq:additional growth condition 2}). Let $u:M\times(-\infty, 0]\to N$ be an ancient solution to (\ref{eq:V-harmonic heat flow}). If the image
of $u$ is contained in $B_{\pi/2\sqrt{\kappa}}(y_0)$, and if $u$ satisfies a growth condition
\begin{equation*}
\frac{1}{\cos\sqrt{\kappa}\rho(u(x))} = o(r(x)^{1/2}+|t|^{1/4})   
\end{equation*}
near infinity, then $u$ is constant.
\end{theorem}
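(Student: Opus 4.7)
The plan is to carry out a parabolic Bernstein/Souplet--Zhang argument adapted to the $V$-harmonic heat flow, using a geometric reduction on the target to produce a scalar subsolution on $M$. Set $\phi(y) := \sec(\sqrt{\kappa}\,\rho(y))$ on $B_{\pi/2\sqrt{\kappa}}(y_0)$ and $w(x,t) := \phi(u(x,t)) \ge 1$. A direct computation combined with the Hessian comparison on $N$ (valid because the regular ball avoids $\mathrm{Cut}(y_0)$ and $\mathrm{Sec}_N \le \kappa$) yields $\mathrm{Hess}_N\,\phi \ge \kappa\,\phi\,h$ on $B_{\pi/2\sqrt{\kappa}}(y_0)$. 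Applying the chain rule together with $u_t = \tau(u) + du(V)$ then gives
\begin{equation*}
(\partial_t - \Delta_V)\,w \;=\; -\mathrm{Hess}_N\,\phi(du, du) \;\le\; -\kappa\, w\,|du|^2,
\end{equation*}
where $\Delta_V := \Delta + V\cdot \nabla$ is the drift Laplacian. The growth hypothesis then translates into the scalar bound $w(x,t) = o(r(x)^{1/2} + |t|^{1/4})$.

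The heart of the argument is a parabolic Bernstein estimate for $w$. Fix $(x_0, t_0)$, $R > 0$, and set $M_R := \sup_{Q_R} w$ on the cylinder $Q_R := B_{2R}(x_0)\times[t_0 - 4R^2, t_0]$. I would apply the maximum principle to a Souplet--Zhang-type quantity of the form
\begin{equation*}
\Phi \;:=\; \eta^2\,\frac{|\nabla w|^2}{w^{2\alpha}},
\end{equation*}
for an exponent $\alpha \in (0,1]$ to be chosen, where $\eta$ is a smooth space--time cutoff with $\eta \equiv 1$ on $B_R(x_0)\times[t_0 - R^2, t_0]$, $|\nabla\eta| \le C/R$, and $|\partial_t \eta| \le C/R^2$. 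Computing $(\partial_t - \Delta_V)\Phi$ and combining (i) the Bochner formula for $\Delta_V$, which contributes no negative Ricci correction since the Bakry--Emery Ricci of $M$ is nonnegative, (ii) the subsolution inequality of Step~1, which supplies the coercive term $-\kappa w|du|^2$, and (iii) the Wei--Wylie weighted Laplacian comparison, which under the assumptions (1.3)--(1.4) on $V$ bounds $\Delta_V r$ by $(n-1)/r + v(r)/r + O(1)$ so that $|\Delta_V\eta|$ stays of order $C(1 + v(R))/R^2$ on the support of $\nabla\eta$, a standard maximum-principle analysis at an interior maximum of $\Phi$ yields a local gradient estimate of the form
\begin{equation*}
\sup_{B_R(x_0)\times[t_0 - R^2, t_0]} \frac{|\nabla w|}{w} \;\le\; \frac{C}{R}\Bigl(1 + \sqrt{M_R}\Bigr),
\end{equation*}
with $C$ depending only on $\dim M$, $\kappa$, and $C_V$.

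The Liouville conclusion then follows by sending $R \to \infty$ at a fixed $(x_0, t_0)$: the growth hypothesis gives $M_R = o(R^{1/2})$, so the right-hand side of the estimate tends to $0$, forcing $|\nabla w|(x_0, t_0) = 0$. Since $(x_0, t_0)$ was arbitrary, $w$ is spatially constant. Feeding this back into the \emph{identity} form of Step~1 yields $-w'(t) = \mathrm{Hess}_N\,\phi(du,du)(x,t)$, an $x$-independent nonnegative quantity; integrating against the ancient-time growth restriction on $w$ (which forces $-w'$ to have tempered total mass on $(-\infty, 0]$) together with the pointwise bound $\mathrm{Hess}_N\,\phi(du,du) \ge \kappa w|du|^2$ forces $|du| \equiv 0$, so $u$ is constant.

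\emph{Main obstacle.} The technical crux is the Bernstein step: the exponent $\alpha$ must be chosen so that the bad terms from the Bochner formula -- notably $|\mathrm{Hess}\,w|^2$ and the Kato-type contribution of $|du|^2$ coming from $|\nabla w|^2 \le \kappa(w^2-1)|du|^2$ -- are absorbed by the coercive $-\kappa w|du|^2$ from the subsolution inequality together with the $-|\nabla\Phi|^2/\Phi$ term produced at an interior maximum, while the drift $V\cdot\nabla$ contributes only terms of size $O(C_V/R)$ from (1.4), matching the cutoff error rather than overwhelming it. A secondary, familiar complication is the possible non-smoothness of $r$ on the cut locus of $M$, which is dealt with in the standard way by Calabi's barrier trick.
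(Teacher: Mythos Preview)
There is a genuine gap in your final step. Even granting your Bernstein estimate, sending $R\to\infty$ only gives $\nabla w\equiv 0$, i.e.\ $\rho\circ u$ is spatially constant; it does \emph{not} force $|du|=0$, since $u$ can still move tangentially along the distance sphere $\{\rho=\mathrm{const}\}$ in $N$. Your patch via the time ODE does not close this: from $w'(t)=-\mathrm{Hess}_N\phi(du,du)\le -\kappa w|du|^2$ you only learn that $w$ is nonincreasing on $(-\infty,0]$, and the growth hypothesis $w(t)=o(|t|^{1/4})$ is perfectly compatible with $w$ being strictly decreasing and unbounded as $t\to -\infty$ (e.g.\ $w(t)=1+\log(1+|t|)$). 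So nothing forces $w'\equiv 0$ or $|du|\equiv 0$. There is also a looser step in the Bochner computation: $(\Delta_V-\partial_t)|\nabla w|^2$ produces the cross term $2\langle\nabla w,\nabla H\rangle$ with $H=\mathrm{Hess}_N\phi(du,du)$, and unlike in Souplet--Zhang for the linear heat equation this does not vanish; you have not explained how to control $\nabla H$ (which involves second derivatives of $u$ and third derivatives of $\phi$) by the available good terms. Finally, the Kato bound you quote is off by a factor of $w^2$: one has $|\nabla w|^2\le \kappa\,w^2(w^2-1)\,|du|^2$.

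The paper avoids both problems by never passing through the scalar $w$: it works directly with the energy density via the J\"ager--Kaul/Choi type quantity $\omega=\dfrac{\|du\|^2}{(b-\varphi\circ u)^2}$ with $\varphi=1-\cos\sqrt{\kappa}\rho$ and $b=\tfrac12(1+\sup_{Q_{R,T}}\varphi\circ u)$. A parabolic Bochner formula for the map (not for a scalar) together with the Hessian comparison yields
\[
(\Delta_V-\partial_t)\omega-\frac{2g(\nabla\omega,\nabla(\varphi\circ u))}{b-\varphi\circ u}\;\ge\;2\kappa(1-b)(b-\varphi\circ u)\,\omega^2,
\]
and a Souplet--Zhang cutoff plus maximum principle then gives, for $A=0$,
\[
\sup_{Q_{R/2,R^2/4}}\frac{\|du\|}{b-\varphi\circ u}\;\le\;\frac{C_m}{\sqrt{\kappa}\,R}\,\sup_{Q_{R,R^2}}\Bigl(\frac{1}{\cos\sqrt{\kappa}\rho\circ u}\Bigr)^{2}.
\]
Since $b\le 1$, the growth hypothesis makes the right-hand side $o(1)$ and one concludes $\|du\|\equiv 0$ immediately. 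The point is that the test quantity already \emph{is} the full energy density, so no extra argument is needed to upgrade ``$\rho\circ u$ constant'' to ``$u$ constant''.
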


A direct application of Theorem \ref{thm:Liouville Theorem heat flow} is the following result.

\begin{corollary}\label{thm:Liouville Theorem}
Let $(M,g)$ be a complete Riemannian manifold with nonnegative Bakry-Emery Ricci curvature, and $(N,h)$ be a complete Riemannian manifold with sectional curvature bounded above by a positive constant $\kappa$. Assume that $B_{\pi/2\sqrt{\kappa}}(y_0)$ does not meet $\text{Cut}(y_0)$. Suppose that the vector field $V$ satisfies (\ref{eq:additional growth condition 1}) for some nondecreasing function $v(\cdot)$ satisfying (\ref{eq:additional growth condition 2}). Let $u:M\to N$ be a V-harmonic map. If the image of $u$ is contained in $B_{\pi/2\sqrt{\kappa}}(y_0)$, and if $u$ satisfies a growth condition
\begin{equation*}
\frac{1}{\cos\sqrt{\kappa}\rho(u(x))} = o(d(x)^{1/2})   
\end{equation*}
near infinity, then $u$ is constant.
\end{corollary}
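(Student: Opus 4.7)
The plan is to reduce Corollary \ref{thm:Liouville Theorem} to Theorem \ref{thm:Liouville Theorem heat flow} by viewing a $V$-harmonic map as a time-independent ancient solution of the $V$-harmonic heat flow \eqref{eq:V-harmonic heat flow}. Concretely, I would set $\tilde{u}:M\times(-\infty,0]\to N$ by $\tilde{u}(x,t):=u(x)$. Since $u$ is $V$-harmonic, $\tau_V(u)=0$, and since $\tilde u$ does not depend on $t$, we have $\partial_t\tilde{u}=0=\tau_V(\tilde{u})$, so $\tilde u$ is an ancient solution to \eqref{eq:V-harmonic heat flow}. Moreover, the image of $\tilde u$ coincides with the image of $u$, and is therefore contained in $B_{\pi/2\sqrt{\kappa}}(y_0)$.

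The next step is to verify that $\tilde u$ satisfies the growth hypothesis of Theorem \ref{thm:Liouville Theorem heat flow}, namely
\[
\frac{1}{\cos\sqrt{\kappa}\,\rho(\tilde u(x,t))}=o\!\bigl(r(x)^{1/2}+|t|^{1/4}\bigr)
\]
as $r(x)^2+|t|\to\infty$. I would split into two regimes. If $r(x)\to\infty$, then the hypothesis $1/\cos\sqrt{\kappa}\rho(u(x))=o(d(x)^{1/2})$ (where $d(x)=r(x)$) directly implies the desired bound, because $r(x)^{1/2}\le r(x)^{1/2}+|t|^{1/4}$. If instead $r(x)$ remains bounded while $|t|\to\infty$, I use that closed balls in the complete manifold $M$ are compact (Hopf–Rinow), so $\rho(u(\cdot))$ attains its maximum on $\{r\le R\}$ and stays strictly below $\pi/2\sqrt{\kappa}$ there; hence $1/\cos\sqrt{\kappa}\rho(u(x))$ is bounded on that compact set, while $|t|^{1/4}\to\infty$, and again the ratio tends to $0$.

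Once these hypotheses are checked, Theorem \ref{thm:Liouville Theorem heat flow} applies and yields that $\tilde u$ is constant. Since $\tilde u(x,0)=u(x)$, this immediately gives that $u$ is constant, completing the proof.

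I do not anticipate any real obstacle here: all the analytic content (the gradient estimate and the delicate near-infinity argument) is absorbed into the parabolic theorem. The only mildly technical point is the second regime of the growth-condition verification, where one must rule out the pathological possibility that $\rho(u(x))\to \pi/2\sqrt{\kappa}$ on a bounded set of $M$; this is handled by the compactness of metric balls under completeness and the continuity of $u$.
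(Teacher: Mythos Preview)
Your proposal is correct and matches the paper's intent: the paper presents Corollary \ref{thm:Liouville Theorem} simply as ``a direct application'' of Theorem \ref{thm:Liouville Theorem heat flow} without giving further details, and your reduction via the time-independent extension $\tilde u(x,t):=u(x)$ is exactly that application. Your careful split into the two regimes when verifying the parabolic growth condition is more explicit than anything the paper writes, but it is the natural way to justify the step and introduces nothing new.
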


\section{Proof of the main theorem}\label{sec:proof}
Before giving our proof of main results, we prove a lemma we need. 
For $R,T>0$, we define 
\begin{equation*}
\begin{split}
&B_R(x_{0})=\{x\in M\vert r(x)< R\}, \\
&Q_{R,T}=\overline{B_R(x_0)}\times[-T, 0]\subset M\times(-\infty,+\infty).    
\end{split}
\end{equation*}
Here $r(x)$ is the Riemannian distance function from a fixed point $x_0$ on $M$.
As in \cite[Theorem 5.1]{MR4299896}, we set
\begin{equation}\label{2.1.}
\varphi=1-\cos\sqrt{\kappa}\rho, \qquad b=\frac{1}{2}\left(1+\underset{Q_{R,T}}{\text{sup}}\varphi\circ u\right)
\end{equation}
and 
\begin{equation*}
\omega=\frac{\Vert du\Vert^2}{(b-\varphi\circ u)^2},
\end{equation*}
where $u:M\times(-\infty, 0]\to N$ is an ancient solution of \eqref{eq:V-harmonic heat flow}, then we have the following: 

\begin{lemma}\label{thm:estimate of u}
Let $(M,g)$ be a complete Riemannian manifold with Barky-Emery Ricci curvature 
\begin{equation*}
\text{Ric}_V=\text{Ric}-\frac{1}{2}L_V g\ge -A
\end{equation*}
where $\text{Ric}$ is the Ricci curvature of $M$, $L_V$ is the Lie derivative along the vector field $V$ on $M$, and $A$ is a nonnegative constant. Let $(N,h)$ be a Riemannian manifold with sectional curvature bounded above by a constant $\kappa>0$. Assume that $B_{\pi/2\sqrt{\kappa}}(y_0)$ does not meet the cut locus $\text{Cut}(y_0)$ of $y_0\in N$. Let $u:M\times(-\infty, 0]\to N$ be an ancient solution to $V$-harmonic map heat flow \eqref{eq:V-harmonic heat flow}. Suppose that the image of $u$ is contained in $B_{\pi/2\sqrt{\kappa}}(y_0)$.
Then we have
\begin{equation*}
\begin{split}
(\Delta_V-\partial_t)\omega-\frac{2g(\nabla\omega, \nabla(\varphi\circ u))}{b-\varphi\circ u} 
\ge & 2\kappa(1-b)(b-\varphi\circ u)\omega^2  
-2A\omega,    
\end{split}
\end{equation*}
where $\Delta_V=\Delta+g(V,\nabla\cdot)$ is the $V$-Laplace operator.
\end{lemma}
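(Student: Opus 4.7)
The strategy follows the scheme of \cite[Theorem~5.1]{MR4299896}, adapted to the $V$-harmonic heat flow. The proof combines three ingredients: a Bochner-type identity for $|du|^2$, a Hessian comparison for $\varphi\circ u$, and an algebraic reorganisation of the quotient $\omega=|du|^2/(b-\varphi\circ u)^2$ via the product rule.

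For the first ingredient, I would differentiate $|du|^2$ along the flow $\partial_{t}u=\tau(u)+du(V)$ and commute covariant derivatives; the extra $du(V)$-contribution produced on the time side is precisely what converts $\Delta$ into $\Delta_V$ and $\mathrm{Ric}$ into $\mathrm{Ric}_V$ on the spatial side, yielding
\begin{equation*}
(\Delta_V-\partial_t)|du|^2=2|\nabla du|^2+2\mathrm{Ric}_V(du,du)-2\sum_{i,j}R^N(du(e_i),du(e_j),du(e_i),du(e_j)).
\end{equation*}
The hypotheses $\mathrm{Ric}_V\ge-A$ and $\mathrm{Sec}^N\le\kappa$ reduce this to $(\Delta_V-\partial_t)|du|^2\ge 2|\nabla du|^2-2A|du|^2-2\kappa|du|^4$. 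For the second ingredient, the chain rule combined with the flow equation gives
\begin{equation*}
(\Delta_V-\partial_t)(\varphi\circ u)=\sum_{i}\mathrm{Hess}^N\varphi(du(e_i),du(e_i)),
\end{equation*}
since the two $d\varphi(\tau(u))$-contributions cancel and $d\varphi(du(V))$ cancels against $V(\varphi\circ u)$. A direct computation with $\varphi=1-\cos\sqrt{\kappa}\rho$, invoking the Hessian comparison theorem inside $B_{\pi/2\sqrt{\kappa}}(y_0)$, gives $\mathrm{Hess}^N\varphi\ge\kappa(1-\varphi)h$, hence $(\Delta_V-\partial_t)(\varphi\circ u)\ge\kappa(1-\varphi\circ u)|du|^2$.

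Writing $\psi:=b-\varphi\circ u>0$ and applying the product rule to $\omega=|du|^2\psi^{-2}$, a direct expansion produces the identity
\begin{equation*}
(\Delta_V-\partial_t)\omega-\frac{2\langle\nabla\omega,\nabla(\varphi\circ u)\rangle}{\psi}=\frac{(\Delta_V-\partial_t)|du|^2}{\psi^2}-\frac{2|du|^2(\Delta_V-\partial_t)\psi}{\psi^3}+\frac{2|du|^2|\nabla\psi|^2}{\psi^4}-\frac{2\langle\nabla|du|^2,\nabla\psi\rangle}{\psi^3}.
\end{equation*}
Substituting the two inequalities above, the curvature contributions combine via the algebraic identity $-\psi+(1-\varphi)=1-b$ into the principal term $2\kappa(1-b)(b-\varphi\circ u)\omega^2$, and the $\mathrm{Ric}_V$ term contributes exactly $-2A\omega$. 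The only non-trivial remaining step, and the main obstacle, is the cross term $-2\langle\nabla|du|^2,\nabla\psi\rangle/\psi^3$, which may be of either sign. Here I would apply Kato's inequality $\bigl|\nabla|du|^2\bigr|\le 2|du|\,|\nabla du|$ combined with Young's inequality in the form
\begin{equation*}
\frac{2\bigl|\langle\nabla|du|^2,\nabla\psi\rangle\bigr|}{\psi^3}\le\frac{2|\nabla du|^2}{\psi^2}+\frac{2|du|^2|\nabla\psi|^2}{\psi^4},
\end{equation*}
so that this cross term is absorbed by the positive Bochner contribution $2|\nabla du|^2/\psi^2$ and the gradient term $2|du|^2|\nabla\psi|^2/\psi^4$ coming from the quotient expansion; the remaining quantities are nonnegative and may be discarded to reach the stated inequality.
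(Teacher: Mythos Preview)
Your proposal is correct and follows essentially the same route as the paper: the same quotient-rule identity for $(\Delta_V-\partial_t)\omega-\tfrac{2\langle\nabla\omega,\nabla(\varphi\circ u)\rangle}{b-\varphi\circ u}$, the same Bochner and Hessian-comparison inputs, and the same Kato/Young (equivalently AM--GM plus Cauchy--Schwarz) absorption of the cross term into $2|\nabla du|^2/\psi^2+2|du|^2|\nabla\psi|^2/\psi^4$. The only cosmetic difference is that the paper writes the cross term with $\nabla(\varphi\circ u)$ rather than $\nabla\psi$ and packages the three residual terms as a single quantity $\mathcal{F}\ge 0$.
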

\begin{proof}
Straightforward computations give that
\begin{equation*}
\begin{split}
\nabla\omega = &\frac{\nabla\Vert du\Vert^2}{(b-\varphi\circ u)^2}+2\frac{\Vert du\Vert^2\nabla(\varphi\circ u)}{(b-\varphi\circ u)^3}, \\
\Delta_V\omega = &\frac{\Delta_V\Vert du\Vert^2}{(b-\varphi\circ u)^2}+\frac{4g(\nabla\Vert du\Vert^2, \nabla(\varphi\circ u))}{(b-\varphi\circ u)^3}+\frac{2\Vert du\Vert^2\Delta_V(\varphi\circ u)}{(b-\varphi\circ u)^3} \\
&+\frac{6\Vert\nabla(\varphi\circ u)\Vert^2\Vert du\Vert^2}{(b-\varphi\circ u)^4} \\
= & \frac{2g(\nabla\omega, \nabla(\varphi\circ u))}{b-\varphi\circ u}+\frac{2\Vert du\Vert^2\Delta_V(\varphi\circ u)}{(b-\varphi\circ u)^3}+\frac{\Delta_V\Vert du\Vert^2}{(b-\varphi\circ u)^2} \\
&+\frac{2g(\nabla\Vert du\Vert^2, \nabla(\varphi\circ u))}{(b-\varphi\circ u)^3}+\frac{2\Vert\nabla(\varphi\circ u)\Vert^2\Vert du\Vert^2}{(b-\varphi\circ u)^4},\\
\partial_t \omega= &\frac{\partial_t\Vert du\Vert^2}{(b-\varphi\circ u)^2}+\frac{2\Vert du\Vert^2 \partial_t(\varphi\circ u)}{(b-\varphi\circ u)^3}.
\end{split}
\end{equation*} 
Then we get
\begin{equation*}
\begin{split}
(\Delta_V-\partial_t)\omega-\frac{2g(\nabla\omega, \nabla(\varphi\circ u))}{b-\varphi\circ u} =&\frac{(\Delta_V-\partial_t)\Vert du\Vert^2}{(b-\varphi\circ u)^2}+\frac{2\Vert du\Vert^2(\Delta_V-\partial_t)(\varphi\circ u)}{(b-\varphi\circ u)^3}  \\
&+\frac{2g(\nabla \Vert du\Vert^2,\nabla(\varphi\circ u))}{(b-\varphi\circ u)^3}+\frac{2\Vert\nabla(\varphi\circ u)\Vert^2\Vert du\Vert^2}{(b-\varphi\circ u)^4}. 
\end{split}
\end{equation*}
Due to Hessian comparison Theorem\cite{MR4299896},
\begin{equation*}
\begin{split}
(\Delta_V-\partial_{t})(\varphi\circ u) &= \sum\limits_{i=1}^m \nabla^2\varphi(du(e_i),du(e_i))+ \langle \nabla \varphi\circ u, \tau_{V}(u)-\partial_{t} u\rangle_h \\
&= \sum\limits_{i=1}^m \nabla^2\varphi(du(e_i),du(e_i)) \\
&\ge \kappa \cos \sqrt{\kappa} \rho \circ u\Vert du\Vert^2.
\end{split}
\end{equation*} 
The parabolic Bochner formula in \cite[Lemma 1]{MR3611336} tells us that
\begin{equation*}
\begin{split}
(\Delta_V-\partial_{t})\Vert du\Vert^2 =& 2\Vert\nabla du\Vert^2 + 2\sum\limits_{i=1}^m \langle du(\text{Ric}_V(e_i)), du(e_i)\rangle \\
&- 2\sum\limits_{i,j=1}^m R^N(du(e_i), du(e_j), du(e_i), du(e_j)) \\
\ge & 2\Vert \nabla du \Vert^2 - 2A \Vert du\Vert^2 -2\kappa \Vert du\Vert^4. 
\end{split}
\end{equation*} 
Hence,
\begin{equation*}
\begin{split}
(\Delta_V-\partial_t)\omega-\frac{2g(\nabla\omega, \nabla(\varphi\circ u))}{b-\varphi\circ u} =&\frac{(\Delta_V-\partial_t)\Vert du\Vert^2}{(b-\varphi\circ u)^2}+\frac{2\Vert du\Vert^2(\Delta_V-\partial_t)(\varphi\circ u)}{(b-\varphi\circ u)^3}  \\
&+\frac{2g(\nabla \Vert du\Vert^2,\nabla(\varphi\circ u))}{(b-\varphi\circ u)^3}+\frac{2\Vert\nabla(\varphi\circ u)\Vert^2\Vert du\Vert^2}{(b-\varphi\circ u)^4} \\
\ge &\frac{1}{(b-\varphi\circ u)^2}(\Vert \nabla du \Vert^2  - 2A \Vert du\Vert^2-2\kappa \Vert du\Vert^4) \\
&+\frac{\kappa \cos \sqrt{\kappa} \rho \circ u\Vert du\Vert^4}{(b-\varphi\circ u)^3}+\frac{2g(\nabla \Vert du\Vert^2,\nabla(\varphi\circ u))}{(b-\varphi\circ u)^3}\\
&+\frac{2\Vert\nabla(\varphi\circ u)\Vert^2\Vert du\Vert^2}{(b-\varphi\circ u)^4} \\
\ge &2\kappa(b-\varphi\circ u)^2(\frac{\cos\sqrt{\kappa}\rho}{b-\varphi\circ u}-1)\omega^2 \\
& -\frac{2A\Vert du\Vert^2}{(b-\varphi\circ u)^2}+2\mathcal{F}  \\
=& 2\kappa(1-b)(b-\varphi\circ u)\omega^2 -\frac{2A\Vert du\Vert^2}{(b-\varphi\circ u)^2} +2\mathcal{F},
\end{split}
\end{equation*}
where 
\begin{equation*}
\mathcal{F}=\frac{\Vert\nabla du\Vert^2}{(b-\varphi\circ u)^2}+\frac{g(\nabla \Vert du\Vert^2,\nabla(\varphi\circ u))}{(b-\varphi\circ u)^3}+\frac{\Vert\nabla(\varphi\circ u)\Vert^2\Vert du\Vert^2}{(b-\varphi\circ u)^4}.
\end{equation*}
By the inequality of arithmetic-geometric means, Kato and Cauchy-Schwarz inequalities, we have
\begin{equation*}
\begin{split}
\frac{\Vert\nabla du\Vert^2}{(b-\varphi\circ u)^2}+\frac{\Vert\nabla(\varphi\circ u)\Vert^2\Vert du\Vert^2}{(b-\varphi\circ u)^4} &\ge \frac{2\Vert \nabla du\Vert\Vert\nabla(\varphi\circ u)\Vert\Vert du\Vert}{(b-\varphi\circ u)^3}\\
& \ge \frac{\Vert\nabla \Vert du\Vert^2\Vert\Vert\nabla(\varphi\circ u)\Vert }{(b-\varphi\circ u)^3}\\
& \ge \frac{\vert g(\nabla \Vert du\Vert^2,\nabla(\varphi\circ u))\vert}{(b-\varphi\circ u)^3},
\end{split}
\end{equation*}
so $\mathcal{F}\geq 0$, which ends the proof.
\end{proof}

\begin{theorem}\label{thm:gradient estimate}
Let $(M^m,g)$ be a complete Riemannian manifold with Barky-Emery Ricci curvature
\begin{equation*}
\text{Ric}_V\ge -A
\end{equation*}
where $A\geq0$ is a constant. Let $(N,h)$ be a complete Riemannian manifold with sectional curvature bounded above by a positive
constant $\kappa$. Assume that $B_{\pi/2\sqrt{\kappa}}(y_0)$ does not meet $\text{Cut}(y_0)$. Suppose that the vector field $V$ satisfies (\ref{eq:additional growth condition 1}) for some nondecreasing function $v(\cdot)$ satisfying (\ref{eq:additional growth condition 2}). Let $u: M\times(-\infty, 0]\to N$ be an ancient solution to the V-harmonic map heat flow \eqref{eq:V-harmonic heat flow}. Suppose that the image of $u$ is contained in $B_{\pi/2\sqrt{\kappa}}(y_0)$. 
Then there is a positive constant $C_m>0$ depending only on $m$ such that for any $R,T>0$,
\begin{equation*}
\sup_{Q_{R/2,T/4}}\frac{\Vert du\Vert}{b-\varphi\circ u}\le \frac{C_m}{\sqrt{\kappa}}(\frac{1}{R}+\frac{1}{\sqrt{T}}+\sqrt{A})\underset {Q_{R,T}}{\sup}(\frac{1}{\cos{\sqrt{\kappa}\rho\circ u}})^2.   
\end{equation*}
\end{theorem}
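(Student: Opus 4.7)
The proof will be a localized Bernstein argument on the parabolic cylinder $Q_{R,T}$. I pick a space-time cutoff $\Phi(x,t)=\eta(r(x))\zeta(t)$ supported in $Q_{R,T}$ and equal to $1$ on $Q_{R/2,T/4}$, with $\eta$ smooth and nonincreasing satisfying $|\eta'|^2\le C\eta/R^2$, $|\eta''|\le C/R^2$, and $\zeta$ smooth satisfying $\zeta(-T)=0$ and $|\zeta'|\le C/T$. I then apply the parabolic maximum principle to $G:=\Phi^{2}\omega$ on $Q_{R,T}$. Either $G\le 0$ on $Q_{R/2,T/4}$ (in which case the estimate is trivial), or the maximum of $G$ is attained at some interior $(x_1,t_1)$ where $\nabla G=0$, $\Delta_V G\le 0$, $\partial_t G\ge 0$, and I work at this point. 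If $x_1$ lies on the cut locus of $x_0$, Calabi's trick is used to perturb the base point.

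\textbf{Key reduction.} At the maximum, $\nabla G=0$ gives $\Phi\nabla\omega=-2\omega\nabla\Phi$, and unwinding $(\Delta_V-\partial_t)G\le 0$ yields
\begin{equation*}
\Phi^2(\Delta_V-\partial_t)\omega + 2\omega\Phi(\Delta_V-\partial_t)\Phi - 6\omega|\nabla\Phi|^2 \le 0.
\end{equation*}
The plan is to insert Lemma~\ref{thm:estimate of u}, use the vanishing-gradient relation to rewrite the awkward term $2g(\nabla\omega,\nabla(\varphi\circ u))/(b-\varphi\circ u)$ as a multiple of $g(\nabla\Phi,\nabla(\varphi\circ u))/\Phi(b-\varphi\circ u)$, and control the latter via $|\nabla(\varphi\circ u)|\le\sqrt\kappa\|du\|=\sqrt{\kappa\omega}\,(b-\varphi\circ u)$. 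Using $(1-b)(b-\varphi\circ u)\ge(1-b)^2$ then produces an inequality of the shape
\begin{equation*}
2\kappa(1-b)^2\Phi^2\omega \le 2A\Phi^2 + 6|\nabla\Phi|^2 - 2\Phi(\Delta_V-\partial_t)\Phi + 4\sqrt\kappa\,\Phi|\nabla\Phi|\sqrt\omega.
\end{equation*}
The last cross term is absorbed into the left-hand side by Young's inequality with weight $(1-b)^2$, at the cost of a residual $|\nabla\Phi|^2/(1-b)^2$. It is precisely this weighted absorption that introduces the factor $(1-b)^{-2}$, and hence the squared $\sup(1/\cos\sqrt\kappa\rho\circ u)$ in the final bound.

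\textbf{Error terms and conclusion.} It remains to bound $(\Delta_V-\partial_t)\Phi$. I will invoke the $V$-Laplacian comparison theorem under $\mathrm{Ric}_V\ge -A$ and hypothesis \eqref{eq:additional growth condition 1}: outside the cut locus, $\Delta_V r\le C_m(\sqrt A+1/r)+v(r)$, and by \eqref{eq:additional growth condition 2} the ratio $v(r)/r$ is uniformly bounded. On the annulus $\{R/2\le r\le R\}$ where $\eta'\ne 0$, this combined with $|\eta'|\le C/R$, $|\eta''|\le C/R^2$, $|\zeta'|\le C/T$ gives $|\Phi(\Delta_V-\partial_t)\Phi|\le C_m(1/R^2+\sqrt A/R+1/T)$, with constants allowed to depend on $m$ and $C_V$. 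Substituting back and using $(1-b)\le 1/2$ to absorb lower-order gradient contributions, I arrive at
\begin{equation*}
\kappa(1-b)^4\Phi^2\omega \le C_m\!\left(\frac{1}{R^2}+\frac{1}{T}+A\right)
\end{equation*}
at $(x_1,t_1)$, and hence throughout $Q_{R,T}$. Restricting to $Q_{R/2,T/4}$ where $\Phi\equiv 1$, taking a square root, and using $2(1-b)=\inf_{Q_{R,T}}\cos\sqrt\kappa\rho\circ u=1/\sup_{Q_{R,T}}(1/\cos\sqrt\kappa\rho\circ u)$ produces the stated estimate. The main obstacle is coordinating the three competing mechanisms---the $N$-curvature quadratic $\kappa(1-b)^2\omega^2$, the $\sqrt\omega$-cross term that forces the $(1-b)^{-2}$-weighted Young absorption, and the $V$-dependent Laplacian comparison---so that their errors combine cleanly into the additive form $1/R+1/\sqrt T+\sqrt A$ multiplied by exactly two powers of $\sup(1/\cos\sqrt\kappa\rho\circ u)$.
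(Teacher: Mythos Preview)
Your proposal is correct and follows essentially the same localized Bernstein argument as the paper: apply the key differential inequality of Lemma~\ref{thm:estimate of u} to a cutoff-weighted $\omega$, use the $V$-Laplacian comparison $\Delta_V r\le C_m(\sqrt A+1/r)+v(r)$ together with \eqref{eq:additional growth condition 2} on the support of $\nabla\psi$, and absorb the $\sqrt\omega$-cross term via a weighted Young inequality so that the final bound picks up exactly $(1-b)^{-2}=4\sup_{Q_{R,T}}(1/\cos\sqrt\kappa\rho\circ u)^2$. The only cosmetic differences are that the paper works with $\psi\omega$ rather than your $\Phi^2\omega$, keeps track of $\delta=(1-b)(b-\varphi\circ u)$ at the maximum point instead of immediately replacing it by $(1-b)^2$, and runs a generic $\epsilon$-Young scheme before optimizing $\epsilon=4\kappa\delta/23$; these choices are equivalent to yours.
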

Firstly, recall the following elementary lemma:
\begin{lemma}[cf. \cite{MR2285258}]\label{thm:auxiliary function}
Let $R, T>0, \alpha\in(0,1)$. Then there is a smooth function $\psi=\psi(r,t):[0,\infty)\times(-\infty, 0]\to[0,1]$, which is supported on $[0,R]\times[-T,0]$, and a constant $C_{\alpha}>0$ depending only on $\alpha$ such that the following hold: \\
(1) $\psi\equiv 1$ on $[0,R/2]\times[-T/4,0]$;\\
(2) $\partial_r\psi\le 0$ on $[0,\infty)\times(-\infty, 0]$, and $\partial_r\psi\equiv 0$ on $[0,R/2]\times(-\infty, 0]$; \\
(3) we have
\begin{equation*}
\frac{\vert\partial_r \psi\vert}{\psi^{\alpha}}\le\frac{C_{\alpha}}{R},\frac{\vert\partial^2_r \psi\vert}{\psi^{\alpha}}\le\frac{C_{\alpha}}{R^2},
\frac{\vert\partial_t \psi\vert}{\psi^{1/2}}\le\frac{C}{T},
\end{equation*}
where $C>0$ is a universal constant.
\end{lemma}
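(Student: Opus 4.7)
The plan is to build $\psi$ as a separated product $\psi(r,t)=\eta(r)\chi(t)$, where $\eta$ and $\chi$ are obtained by raising standard smooth cutoffs to suitable powers chosen so that the factors $\psi^{\alpha}$ and $\psi^{1/2}$ in the denominators of (3) can be absorbed by the small quantity produced when differentiating. The only real ingredient beyond standard bump-function techniques is the choice of exponent, so everything else is bookkeeping.

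First I would fix, once and for all, an auxiliary smooth nonincreasing function $\phi_{0}:\mathbb{R}\to[0,1]$ with $\phi_{0}\equiv 1$ on $(-\infty,1/2]$ and $\phi_{0}\equiv 0$ on $[1,\infty)$; such a $\phi_{0}$ can be produced by mollifying a piecewise linear model, and $\|\phi_{0}'\|_{\infty}$ and $\|\phi_{0}''\|_{\infty}$ are then finite universal constants. With $p:=\lceil 2/(1-\alpha)\rceil$, I set
\begin{equation*}
\eta(r):=\phi_{0}(r/R)^{p},\qquad \chi(t):=\phi_{0}(-t/T)^{2},\qquad \psi(r,t):=\eta(r)\,\chi(t).
\end{equation*}
The support statement, together with properties (1) and (2), follows immediately: $\eta$ is supported in $[0,R]$ and equals $1$ on $[0,R/2]$, with $\partial_{r}\eta\le 0$ and $\partial_{r}\eta\equiv 0$ on $[0,R/2]$; similarly $\chi$ is supported in $[-T,0]$ and equals $1$ on $[-T/2,0]\supset[-T/4,0]$.

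For (3), differentiating $\eta$ once yields a factor $(p/R)\,\phi_{0}(r/R)^{p-1}\phi_{0}'(r/R)$, so
\begin{equation*}
\frac{|\partial_{r}\psi|}{\psi^{\alpha}}\le\frac{p}{R}\,|\phi_{0}'(r/R)|\,\phi_{0}(r/R)^{p-1-p\alpha}\,\chi(t)^{1-\alpha}\le\frac{C_{\alpha}}{R},
\end{equation*}
because $p-1-p\alpha\ge 0$, $\chi^{1-\alpha}\le 1$, and $|\phi_{0}'|$ is universally bounded. Applying the product rule for the second spatial derivative produces two terms whose worst $\phi_{0}$-exponent is $p-2-p\alpha$, which is nonnegative by the choice of $p$, yielding the $C_{\alpha}/R^{2}$ bound. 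Finally, $\partial_{t}\chi=-(2/T)\,\phi_{0}(-t/T)\,\phi_{0}'(-t/T)$, so the factor $\chi^{1/2}$ cancels exactly one $\phi_{0}$ in $|\partial_{t}\psi|/\psi^{1/2}$, leaving a universal bound $C/T$.

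The main (and really only) subtlety, and the whole point of the lemma, is the choice of the exponent $p$: an ordinary linearly-decaying smooth cutoff would violate (3) because $|\partial_{r}\psi|/\psi^{\alpha}$ would blow up at the boundary of the support. Raising the base cutoff to a power $p\ge 2/(1-\alpha)$ costs nothing in smoothness, since $\phi_{0}$ vanishes to infinite order at $r/R=1$, and it produces the required $\psi^{\alpha}$-type degeneration of $\partial_{r}\psi$ near $\partial\,\mathrm{supp}\,\psi$. Once $p$ is chosen, all three estimates in (3) reduce to universal bounds on $\|\phi_{0}'\|_{\infty}$ and $\|\phi_{0}''\|_{\infty}$.
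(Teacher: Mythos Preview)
Your construction is correct and is exactly the standard one: a separated product of powers of a fixed smooth cutoff, with the exponent $p\ge 2/(1-\alpha)$ chosen so that the powers $\phi_0^{p-1-p\alpha}$ and $\phi_0^{p-2-p\alpha}$ remain bounded. The paper does not give its own proof of this lemma; it is quoted from Souplet--Zhang \cite{MR2285258}, where the same product-of-powers construction appears, so your argument is essentially the intended one.
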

\begin{proof}[Proof of Theorem \ref{thm:gradient estimate}]
We take a function $\psi:[0,\infty)\times(-\infty,0]\to [0,1]$ in Lemma \ref{thm:auxiliary function} with $\alpha=3/4$, and set
\begin{equation*}
\psi(x,t):=\psi(r(x),t).
\end{equation*}
Denote 
\begin{equation*}
\Phi=(\Delta_V-\partial_t)(\psi\omega)-\frac{2g(\nabla\psi, \nabla(\psi\omega))}{\psi}-\frac{2g(\nabla(\psi\omega), \nabla(\varphi\circ u))}{b-\varphi \circ u}.    
\end{equation*}
By direct computations and using Lemma \ref{thm:estimate of u}, we have
\begin{equation}\label{eq:main crude inequality}
\begin{split}
\Phi=&\psi(\Delta_V-\partial_t)\omega-\frac{2\psi g(\nabla\omega, \nabla(\varphi\circ u))}{b-\varphi\circ u}
-\frac{2\omega\Vert\nabla\psi\Vert^2}{\psi} \\
&+\omega(\Delta_V-\partial_t)\psi-\frac{2\omega g(\nabla\psi, \nabla(\varphi\circ u))}{b-\varphi\circ u} \\
\ge &2\kappa(1-b)(b-\varphi\circ u)\psi\omega^2-2A\psi \omega \\
&+\omega(\Delta_V-\partial_t)\psi-\frac{2\omega\Vert\nabla\psi\Vert^2}{\psi}-\frac{2\omega g(\nabla\psi,\nabla(\varphi\circ u))}{b-\varphi\circ u}. \\
\end{split}    
\end{equation}   
Rewrite (\ref{eq:main crude inequality}) as
\begin{equation*}
\begin{split}
2\kappa(1-b)(b-\varphi\circ u)\psi\omega^2\le& 2A\psi \omega-\omega(\Delta_V-\partial_t)\psi \\
&+\frac{2\omega\Vert\nabla\psi\Vert^2}{\psi}+\frac{2\omega g(\nabla\psi,\nabla(\varphi\circ u))}{b-\varphi\circ u}+\Phi.    
\end{split}
\end{equation*}
By the Young inequality and $0\leq \psi\le 1$, for any $\epsilon>0$, we have
\begin{equation*}
\begin{split}
 2A\psi\omega \le  \epsilon \psi^2\omega^2+\frac{A^2}{\epsilon} \le  \epsilon\psi\omega^2+\frac{A^2}{\epsilon}.
\end{split}
\end{equation*}
Lemma \ref{thm:auxiliary function} tells us that
\begin{equation}\label{eq:3/4 parameter}
\frac{\vert\partial_r \psi\vert^2}{\psi^{3/2}}\le\frac{C^2_{3/4}}{R^2} \quad \text{and}\quad
\frac{\vert\partial^2_r \psi\vert^2}{\psi^{3/2}}\le\frac{C^2_{3/4}}{R^4}.   
\end{equation}
According to \cite{MR2995205}, we know
\begin{equation}\label{eq:Laplacian of Riemannian distance}
\begin{split}
\Delta_V r &\le \sqrt{(m-1)A}\,\text{coth}(\sqrt{\frac{A}{m-1}}r)+v(r) \\
&\le(\sqrt{(m-1)A}+\frac{m-1}{r})+v(r)
\end{split}
\end{equation}
where $m$ is the dimension of $M$.
Using (\ref{eq:3/4 parameter}) and (\ref{eq:Laplacian of Riemannian distance}), we get at any $x\not\in \text{Cut}(x_0)$ 
\begin{equation*}
\begin{split}
-\omega(\Delta_V-\partial_t)\psi\le & -\omega (\partial_r\psi(\Delta_V-\partial_t)r+\partial_r^2\psi\Vert\nabla r\Vert^2-\partial_t\psi) \\
\le & \omega (\vert\partial_r\psi\vert(\sqrt{(m-1)A}+\frac{m-1}{R}+v(R))+\vert\partial_r^2\psi\vert+\vert\partial_t\psi\vert) \\
\le & (\epsilon\psi\omega^2+\frac{(\sqrt{(m-1)A}+\frac{m-1}{R}+v(R))^2}{4}\frac{\vert\partial_r\psi\vert^2}{\epsilon\psi}) \\
&+(\epsilon\psi\omega^2+\frac{1}{4}\frac{\vert\partial_r^2\psi\vert^2}{\epsilon\psi})+(\epsilon\psi\omega^2 +\frac{1}{4}\frac{\vert\partial_t\psi\vert^2}{\epsilon\psi}) \\
\le & 3\epsilon\psi\omega^2+\frac{C^2_{3/4}(1+3(m-1)^2)}{4\epsilon}\frac{\psi^{1/2}}{R^4}+\frac{3C^{2}_{3/4}v^2(R)}{4\epsilon}\frac{\psi^{1/2}}{R^2} \\
&+\frac{C^2}{4\epsilon}\frac{1}{T^2}+\frac{3C^{2}_{3/4}}{4\epsilon}\frac{(m-1)A\psi^{1/2}}{R^2} \\
\le & 3\epsilon\psi\omega^2+\frac{C^2_{3/4}(1+3(m-1)^2)}{4\epsilon}\frac{1}{R^4}+\frac{3C^{2}_{3/4}v^2(R)}{4\epsilon}\frac{1}{R^2} \\
&+\frac{C^2}{4\epsilon}\frac{1}{T^2}+\frac{3C^{2}_{3/4}}{4\epsilon}\frac{(m-1)A}{R^2},
\end{split}
\end{equation*}
Due to the Young inequality, the following two inequalities hold respectively
\begin{equation*}
\begin{split}
\frac{2\omega\Vert\nabla\psi\Vert^2}{\psi}\le & \epsilon\psi\omega^2+\frac{\Vert\nabla\psi\Vert^4}{\epsilon\psi^3} \\
\le & \epsilon\psi\omega^2+\frac{C^4_{3/4}}{\epsilon}\frac{1}{R^4},  
\end{split}
\end{equation*}
\begin{equation*}
\begin{split}
\frac{2\omega g(\nabla\psi,\nabla(\varphi\circ u))}{b-\varphi\circ u} \le & \frac{2\omega \Vert\nabla\psi\Vert \Vert\nabla(\varphi\circ u)\Vert}{b-\varphi\circ u} \\
\le & 2\sqrt{\kappa}\omega^{3/2}\Vert\nabla\psi\Vert \\
\le & \frac{3\epsilon}{4}\psi\omega^2+\frac{4\kappa^2}{\epsilon^3}\frac{\Vert\nabla\psi\Vert^4}{\psi^3} \\
\le & \frac{3\epsilon}{4}\psi\omega^2+\frac{4\kappa^2C^{4}_{3/4}}{\epsilon^3}\frac{1}{R^4}.
\end{split}
\end{equation*}
The above estimates lead to
\begin{equation}\label{eq:estimate of omega}
\begin{split}
2\kappa(1-b)(b-\varphi\circ u)\psi\omega^2\le& \frac{23\epsilon}{4}\psi\omega^2+\frac{C^2}{4\epsilon}\frac{1}{T^2}+\frac{3C^{2}_{3/4}v^2(R)}{4\epsilon}\frac{1}{R^2} \\
&+\frac{C^2_{3/4}}{\epsilon}(\frac{1+3(m-1)^2}{4}+C^2_{3/4} +\frac{4\kappa^2C^{2}_{3/4}}{\epsilon^2})\frac{1}{R^4} \\
&+\frac{A^2}{\epsilon}+\frac{3C^{2}_{3/4}}{4\epsilon}\frac{(m-1)A}{R^2}+\Phi. 
\end{split}
\end{equation}   
at every point in $Q_{R,T}$, where the universal constants
$C_{3/4},C>0$ are given in Lemma \ref{thm:auxiliary function}.

Suppose that the maximum of $\psi\omega$ in $Q_{R,T}$ is reached at $(\bar{x},\bar{t})$. We may assume that $\psi\omega$ is positive at $(\bar{x},\bar{t})$, or else the theorem follows trivially. Then at point $(\bar{x},\bar{t})$, we have
\begin{equation*}
\nabla(\psi\omega)=0, \quad \Delta_V(\psi\omega)\le 0,\quad \partial_t(\psi\omega)\ge 0.
\end{equation*}
Thus 
\begin{equation}\label{eq:maximum point condition}
\Phi(\bar{x},\bar{t})=(\Delta_V-\partial_t)(\psi\omega)-\frac{2g(\nabla\psi, \nabla(\psi\omega))}{\psi}-\frac{2g(\nabla(\psi\omega), \nabla(\varphi\circ u))}{b-\varphi \circ u}\le 0
\end{equation}
at point $(\bar{x},\bar{t})$.
Set
\begin{equation*}
\delta=(1-b)(b-\varphi(u(\bar{x},\bar{\tau}))).   
\end{equation*}
Note that $\delta\in (0,\frac{1}{4}]$. In fact, from \eqref{2.1.}, we obtain that $0\leq \varphi<1$ and $\frac{1}{2}\leq b<1$, and thus $\delta>0$. By \eqref{2.1.} and the inequality $ab\leq \left(\frac{a+b}{2}\right)^2$ for any $a,b>0$, we get $\delta\leq \frac{(\cos\sqrt{k}\rho\circ u)^2}{4}\leq \frac{1}{4}$. Since
\begin{equation*}
\frac{1}{1-b}=2\underset{Q_{R,T}}{\text{sup}}\frac{1}{\cos\sqrt{\kappa}\rho\circ u}
\end{equation*}
and
\begin{equation*}
\frac{1}{b-\varphi(u(\bar{x},\bar{\tau}))}\le \frac{2}{1-\text{sup}_{Q_{R,T}}\varphi\circ u}=2\underset{Q_{R,T}}{\text{sup}}\frac{1}{\cos\sqrt{\kappa}\rho\circ u},   
\end{equation*}
it follows that
\begin{equation*}
\frac{1}{\delta}\le 4\underset{Q_{R,T}}{\text{sup}}(\frac{1}{\cos\sqrt{\kappa}\rho\circ u})^2.   
\end{equation*}
Setting $\epsilon = 4\kappa\delta/23$ in (\ref{eq:estimate of omega}), combining with (\ref{eq:maximum point condition}), we get
\begin{equation*}
\begin{split}
\kappa\delta\psi\omega^2\le & \frac{23C^2_{3/4}}{4\kappa\delta}(\frac{1+3(m-1)^2}{4}+C^2_{3/4} +\frac{529C^{2}_{3/4}}{4\delta^2})\frac{1}{R^4} \\
&+\frac{23C^2}{16\kappa\delta}\frac{1}{T^2}+\frac{69C^2_{3/4}v^2(R)}{16\kappa\delta}\frac{1}{R^2}+\frac{23}{4\kappa\delta}A^2+\frac{69C^{2}_{3/4}}{16\kappa\delta}\frac{(m-1)A}{R^2} 
\end{split}  
\end{equation*}
Hence
\begin{equation*}
\begin{split}
(\psi\omega)^2 \le & \frac{23C^2_{3/4}}{4\kappa^2\delta^2}(\frac{1+3(m-1)^2}{4}+C^2_{3/4} +\frac{529C^{2}_{3/4}}{4\delta^2})\frac{1}{R^4} \\
&+\frac{23C^2}{16\kappa^2\delta^2}\frac{1}{T^2}+\frac{69C^2_{3/4}v^2(R)}{16\kappa^2\delta^2}\frac{1}{R^2}+\frac{23}{4\kappa^2\delta^2}A^2+\frac{69C^{2}_{3/4}}{16\kappa^2\delta^2}\frac{(m-1)A}{R^2} \\
\le & \frac{23C^2_{3/4}}{4\kappa^2\delta^2}(\frac{1+6(m-1)^2}{4}+C^2_{3/4} +\frac{529C^{2}_{3/4}}{4\delta^2})\frac{1}{R^4} \\
&+\frac{23C^2}{16\kappa^2\delta^2}\frac{1}{T^2}+\frac{69C^2_{3/4}C_V}{16\kappa^2\delta^2}\frac{1}{R^4}+\frac{92+69C^{2}_{3/4}}{16\kappa^2\delta^2}A^2
\end{split}  
\end{equation*}
Owing to $\delta\in(0,1)$, there is a positive constant $\bar{c}_m$ depending on $m$ such that
\begin{equation*}
(\psi\omega)^2(\bar{x},\bar{t})\le \frac{\bar{c}_m}{\kappa^2\delta^4}(\frac{1}{R^4}+\frac{1}{T^2}+A^2). 
\end{equation*}
It follows that, for all $(x,t)\in Q_{R,T}$,
\begin{equation*}
\psi\omega(x,t)\le \psi\omega(\bar{x},\bar{t})\le \frac{\bar{c}_m^{1/2}}{\kappa\delta^2}(\frac{1}{R^2}+\frac{1}{T}+A). 
\end{equation*}
Using $\psi\equiv 1$ on $Q_{R/2,T/4}$, we know
\begin{equation*}
\begin{split}
\frac{\Vert du\Vert}{b-\varphi\circ u} &\le \frac{\bar{c}_m^{1/4}}{\sqrt{\kappa}\delta}(\frac{1}{R}+\frac{1}{\sqrt{T}}+\sqrt{A}) \\
&\le \frac{4\bar{c}_m^{1/4}}{\sqrt{\kappa}}(\frac{1}{R}+\frac{1}{\sqrt{T}}+\sqrt{A}) \underset{Q_{R,T}}{\text{sup}}(\frac{1}{\cos\sqrt{\kappa}\rho\circ u})^2    
\end{split}
\end{equation*}
on $Q_{R/2,T/4}$. This finishes the proof. 
\end{proof}

We are now in a position to show Theorem \ref{thm:Liouville Theorem heat flow}.
\begin{proof}[Proof of Theorem \ref{thm:Liouville Theorem heat flow}]
Let $u:M\to N$ be an ancient solution to V-harmonic heat flow. For $R>0$ we put 
\begin{equation*}
\mathcal{A}_{R}:= \underset{Q_{R,R^2}}{\text{sup}} (\frac{1}{\cos\sqrt{\kappa}\rho\circ u})^2.  
\end{equation*}
From the growth condition, we know that $\mathcal{A}_{R}=o(R)$ as $R\to \infty$. For any fixed point $(x,t)\in M\times(-\infty,0)$ and a sufficiently large $R>0$, due to Theorem \ref{thm:gradient estimate}, we know when $A=0$,
\begin{equation*}
\frac{\Vert du\Vert}{b}\le \frac{\Vert du\Vert}{b-\varphi\circ u}\le \frac{2c_m \mathcal{A}_{R}}{R}    
\end{equation*}
at $(x,t)$. Note that $b\le 1$. We complete the proof by letting $R\to \infty$.
\end{proof}

\section*{Acknowledgements}


\begin{thebibliography}{BGKT18}

\bibitem[Cen00]{MR1757888}
Centore, Paul.
\newblock Finsler {L}aplacians and minimal-energy maps.
\newblock {\em Internat. J. Math.}, 11(1):1--13, 2000.

\bibitem[CJQ12]{MR2995205}
Chen, Qun and Jost, J\"urgen and Qiu, Hongbing.
\newblock Existence and {L}iouville theorems for {$V$}-harmonic maps
              from complete manifolds.
\newblock {\em Ann. Global Anal. Geom.}, 42(4):565--584, 2012.

\bibitem[CJW15]{MR3427131}
Chen, Qun and Jost, J\"urgen and Wang, Guofang.
\newblock A maximum principle for generalizations of harmonic maps in
              {H}ermitian, affine, {W}eyl, and {F}insler geometry.
\newblock {\em J. Geom. Anal.}, 25(4):91--94, 2015.

\bibitem[CQ16]{MR3479571}
Chen, Qun and Qiu, Hongbing.
\newblock Rigidity of self-shrinkers and translating solitons of mean
              curvature flows.
\newblock {\em Adv. Math.}, 294:517--531, 2016.

\bibitem[CQ23]{MR4582134}
Chen, Qun and Qiu, Hongbing.
\newblock Liouville theorems for ancient solutions to the {$V$}-harmonic
              map heat flows.
\newblock {\em Nonlinear Anal.}, 233:Paper No. 113294, 9, 2023.

\bibitem[Cho82]{MR647905}
Choi, Hyeong In.
\newblock On the {L}iouville theorem for harmonic maps.
\newblock {\em Proc. Amer. Math. Soc.}, 85(1):91--94, 1982.

\bibitem[HS08]{MR2407111}
Han, Jingwei and Shen, Yibing.
\newblock Harmonic maps from complex {F}insler manifolds.
\newblock {\em Pacific J. Math.},
236(2):314--356, 2008.

\bibitem[HW09]{MR2483812}
von der Mosel, Heiko and Winklmann, Sven.
\newblock On weakly harmonic maps from {F}insler to {R}iemannian
              manifolds.
\newblock {\em Ann. Inst. H. Poincar\'e{} C Anal. Non Lin\'eaire.},
26(1):39--57, 2009.

\bibitem[JS09]{MR2554637}
Jost, J\"urgen and \c Sim\c sir, Fatma Muazzez.
\newblock Affine harmonic maps.
\newblock {\em Analysis (Munich).},
29(2):185--197, 2009.

\bibitem[JS11]{MR2796654}
Jost, J. and \c Sim\c sir, F. M..
\newblock Non-divergence harmonic maps.
\newblock {\em Contemp. Math.},
542:231--238, 2011.

\bibitem[JY93]{MR1226528}
Jost, J\"urgen and Yau, Shing-Tung.
\newblock A nonlinear elliptic system for maps from {H}ermitian to
              {R}iemannian manifolds and rigidity theorems in {H}ermitian
              geometry.
\newblock {\em Acta Math.},
170(2):221--254, 1993.

\bibitem[Kok09]{MR2520354}
Kokarev, Gerasim.
\newblock On pseudo-harmonic maps in conformal geometry.
\newblock {\em Proc. Lond. Math. Soc. (3)},
99(1):168--194, 2009.

\bibitem[KS21a]{MR4322556}
Kunikawa, Keita and Sakurai, Yohei.
\newblock Liouville theorem for heat equation along ancient super
              {R}icci flow via reduced geometry.
\newblock {\em J. Geom. Anal.},
31(12):11899--11930, 2021.

\bibitem[KS21b]{MR4299896}
Kunikawa, Keita and Sakurai, Yohei.
\newblock Liouville theorems for harmonic map heat flow along ancient
              super {R}icci flow via reduced geometry.
\newblock {\em Calc. Var. Partial Differential Equations},
60(5):Paper No. 199, 24, 2021.

\bibitem[LZ07]{MR2296630}
Li, Zhenyang and Zhang, Xi.
\newblock Hermitian harmonic maps into convex balls.
\newblock {\em Canad. Math. Bull.}, 50(1):113--122, 2007.

\bibitem[Mo01]{MR1895460}
Mo, Xiaohuan.
\newblock Harmonic maps from {F}insler manifolds.
\newblock {\em Illinois J. Math.}, 45(4):1331--1345, 2001.

\bibitem[MY05]{MR2156621}
Mo, Xiaohuan and Yang, Yunyan.
\newblock The existence of harmonic maps from {F}insler manifolds to
              {R}iemannian manifolds.
\newblock {\em Sci. China Ser. A.}, 48(1):115--130, 2005.

\bibitem[Qiu17]{MR3611336}
Qiu, Hongbing.
\newblock The heat flow of {$V$}-harmonic maps from complete manifolds
              into regular balls.
\newblock {\em Proc. Amer. Math. Soc.}, 145(5):2271--2280, 2017.

\bibitem[Qiu22a]{MR4478479}
Qiu, Hongbing.
\newblock Rigidity of symplectic translating solitons.
\newblock {\em J. Geom. Anal.}, 32(11):Paper No. 284, 19, 2022.

\bibitem[Qiu22b]{MR4496501}
Qiu, Hongbing.
\newblock A {B}ernstein type result of translating solitons.
\newblock {\em Calc. Var. Partial Differential Equations}, 61(6):Paper No. 228, 9, 2022.

\bibitem[QS22]{MR4290282}
Qiu, Hongbing and Sun, Linlin.
\newblock Rigidity theorems of spacelike entire self-shrinking graphs in
              the pseudo-{E}uclidean space.
\newblock {\em J. Funct. Anal.}, 281(9):Paper No. 109189, 24, 2021.

\bibitem[SZ06]{MR2285258}
Souplet, Philippe and Zhang, Qi S.
\newblock Sharp gradient estimate and {Y}au's {L}iouville theorem for
              the heat equation on noncompact manifolds.
\newblock {\em Bull. London Math. Soc.}, 38(6):1045--1053, 2006.

\bibitem[Yau75]{MR431040}
Shing~Tung Yau.
\newblock Harmonic functions on complete {R}iemannian manifolds.
\newblock {\em Comm. Pure Appl. Math.}, 28:201--228, 1975.

\bibitem[Zha12]{MR3077216}
Zhang, Xi.
\newblock Hermitian harmonic maps between almost Hermitian manifolds.
\newblock {\em Recent developments in geometry and analysis}, 485–493, Adv. Lect. Math. (ALM ), 23, Int. Press, Somerville, MA, 2012.

\end{thebibliography}
\end{document}